\newcommand{ \PP }{{\mathbb P}}
\newcommand{ \CC }{{\mathbb C}}
\newtheorem{thm}{Theorem}[section]
 \newtheorem{cor}[thm]{Corollary}
 \newtheorem{lem}[thm]{Lemma}
 \theoremstyle{definition}
 \newtheorem{defn}[thm]{Definition}
 \theoremstyle{remark}
 \newtheorem{rem}[thm]{Remark}
\newcommand\map{\dasharrow}
\begin{document}

\title{One example of general unidentifiable tensors}

\author[L. Chiantini]{Luca Chiantini}
\address[L. Chiantini]{Dipartimento di Ingegneria dell'Informazione e Scienze Matematiche 
\\ Universit\`{a} di Siena \\ Pian dei Mantellini 44 \\ 53100 Siena,  Italia}
\email{luca.chiantini@unisi.it}
\urladdr{\tt{http://www.mat.unisi.it/newsito/docente.php?id=4}}

\author[M. Mella]{Massimiliano Mella}
\address[M. Mella]{ Dipartimento di  Matematica e Informatica\\ Universit\`{a} di Ferrara\\ 
Via Machiavelli 35\\ 44121 Ferrara, Italia}
\email{mll@unife.it}
\urladdr{{\tt{http://docente.unife.it/massimiliano.mella}}}

\author[G. Ottaviani]{Giorgio Ottaviani}
\address[G. Ottaviani]{Dipartimento di Matematica e Informatica 'Ulisse Dini'\\ 
Universit\`{a} di Firenze \\ Viale Morgagni 67/A \\ 50134 Firenze, Italia}
\email{ottavian@math.unifi.it}
\urladdr{\tt{http://web.math.unifi.it/users/ottavian/}}

\date{\today}

\begin{abstract} {The identifiability  of parameters in a probabilistic model is
a crucial notion in statistical inference. } We prove that a general tensor of rank $8$ in
$\CC^3\otimes\CC^6\otimes\CC^6$ has at least $6$ decompositions
as sum of simple tensors, { so it is not $8$-identifiable}. This is the highest known example
of balanced tensors of {dimension} $3$, which are not $k$-identifiable,
when $k$ is smaller than the generic rank.
\end{abstract}

\subjclass{}

 \maketitle

\thispagestyle{empty}

\section{Introduction}

 The decomposition of tensors $T\in \CC^{a_1+1}\otimes\cdots\otimes\CC^{a_q+1}$ as a sum
 of {\it simple} tensors (i.e. tensors of rank $1$) is a central
 problem for many applications of Multilinar Algebra to Algebraic
 Statistics, signal theory, coding theory and others.

 {For  statistical inference, it is meaningful  to know if  
 a probability distribution, arising from a
model, uniquely determines the parameters that produced it. 
When this happens, the parameters are called {\it identifiable}. 
The notion of {\it generic} identifiability for parametric models  
has been considered in \cite{AMR09} and in \cite{SR12} \S 2.2.
Indeed, conditions which guarantee the uniqueness of this decomposition, 
for generic tensors in the model, are quite important in the applications. 
When generic identifiability holds, the set of non-identifiable
parameters has measure zero, thus parameter inference is still meaningful.
Notice that many decomposition algorithms converge to {\it one} decomposition,
hence a uniqueness result guarantees that the decomposition found is the chased one. 
We refer to \cite{KB} and  its huge  reference list, for more details.}

Even from a purely theoretical point of view, the study of the decomposition 
shows some beautiful link between Multilinear Algebra and Projective Geometry.

The present paper is devoted to study one intriguing special case,
which shows an exceptional behavior.

Among tensors $T\in \CC^{a_1+1}\otimes\cdots\otimes\CC^{a_q+1}$ 
whose rank has the generic value,
 only one example is known when we have identifiability, that is  $q=3$, 
{$a_1=1$, $a_2=a_3$.}
On the contrary, general tensors whose rank is
{\it smaller} than the generic value, often have a unique decomposition.

Excluding the cases of matrices (tensors of {dimension} $q=2$), 
identifiability is known to hold when the rank $k$ is {\it small}.
An evidence is given for $q=3$ by the celebrated Kruskal's bound \cite{K},
which, for general tensors of given rank, is refined and extended
in a series of papers (see Strassen's paper \cite{Str}, the recent paper \cite{CO}).

Let's order the $a_i$'s so that $a_1\leq \dots\leq a_q$.
In \cite{BCO} Corollary 8.4 it was proved that, with the assumption $a_q\ge \prod_{i=1}^{q-1}(a_i+1)-\left(\sum_{i=1}^{q-1}a_i\right)$,
the variety $\PP^{a_1}\times\ldots\times\PP^{a_{q}}$ is $k$-identifiable if and only if
$$k\le \prod_{i=1}^{q-1}(a_i+1)-\left(1+\sum_{i=1}^{q-1}a_i\right)$$ 
The general tensor of rank $k>\prod_{i=1}^{q-1}(a_i+1)-\left(1+\sum_{i=1}^{q-1}a_i\right)$
has not a unique decomposition.
After this result, we say that a tensor is {\it unbalanced}
if $a_q\ge \prod_{i=1}^{q-1}(a_i+1)-\left(\sum_{i=1}^{q-1}a_i\right)$ .
This range is one unity larger than the corresponding unbalanced range considered in \cite{AOP}
while studying the dimension of secant varieties of Segre varieties (see \S 8 of \cite{BCO} where these two notions were compared).

 In the case $a_q\le  \prod_{i=1}^{q-1}(a_i+1)-\left(1+\sum_{i=1}^{q-1}a_i\right)$
the corresponding tensors are called {\it balanced}.

Only few examples of balanced tensors, whose rank is smaller than the generic value,   
are known to be not generically identifiable. We mention the case 
of tensors of rank $5$ in $(\CC^2)^{\otimes 5}$ (\cite{BC}) and, in {dimension} $3$,
tensors of rank $3$ in $(\CC^3)^{\otimes 3}$ (classical, see \cite{Str} \S 4)
and tensors of rank $6$ in $(\CC^4)^{\otimes 3}$  (\cite{CO}, Theorem 1.3).

A computer aided analysis (see \cite{BCO} Theor. 7.5) shows that when the numbers $a_i$'s grow, 
sporadic examples disappear, and we expect that a general balanced tensor, 
of rank smaller than the generic value, is  identifiable.

The present paper is devoted to illustrate one sporadic example,
which we believe should  be the last one, for balanced tensors 
of {dimension} $q=3$. Namely, we use a  geometric approach to show
that general tensors of rank $8$ in $\CC^3\otimes \CC^6\otimes\CC^6$
are not uniquely decomposable. Notice that tensors of the mentioned type
have generic rank equal to $9$.

The proof of the non-uniqueness is based on the weak-defectivity principle,
classically introduced by Terracini (\cite{Terr}).
We refer to \cite{CC,M}  and the introduction of \cite{BC}
for an account of the geometric reduction of the problem.

In details, we prove that through $8$ general points of the Segre
variety $\PP^2\times\PP^5\times\PP^5$, which corresponds to
simple tensors in $\CC^3\otimes \CC^6\otimes\CC^6$, one can find
a special fourfold $Y$ which is the Segre-Veronese image
of $\PP^2\times\PP^1\times\PP^1$, embedded by forms of type $(3,1,1)$.
Since through a general point of the span $\PP^{39}$ of $Y$
one can find many linear $7$-spaces which are $8$-secant to $Y$,
then by \cite{CC} Theorem 2.9, it follows the weak defectivity and the
non identifiability of our tensors.  

The example is interesting also because the subvariety $Y$, which produces
the non-identifiability of tensors of rank $8$ in $\CC^3\otimes \CC^6\otimes\CC^6$,
is quite complicate. In particular, we are unable to estimate
how many $8$-secant spaces to $Y$ are there through a general point of 
the span $\PP^{39}$. Consequently, we are unable to determine
how many different decomposition are there, for a general tensor $T$ as above.
We simply know that the number is finite, and at least $6$.

Let us mention that, from the geometrical point of view, the existence
of the subvariety $Y$ through $8$ general points of  $\PP^2\times\PP^5\times\PP^5$
is proved by some ''ad hoc'' argument. A complete theory of special
subvarieties that one can find through general points of Segre
varieties, seems actually far beyond our reach.

\section{Preliminaries}
For basic facts about the geometric point of view 
on tensors we follow \cite{Land}.

Given any irreducible projective variety $X$, we denote 
by $S_k(X)$ the $k$-th secant variety of $X$, that is the Zariski closure 
of the set $\bigcup_{x_1,\ldots, x_k\in X}<x_1,\ldots, x_k>$. 
$S_k(X)$ is indeed the Zariski closure of the set of 
elements having $X$-rank equal to $k$.

In the space $\PP^N=\PP(\CC^{a_1+1}\otimes\cdots\otimes\CC^{a_q+1})$,
where 
{$N=-1+\prod_{i=1}^q (a_q+1)$}, the (projectification of the)
cone $X$ of simple tensors corresponds to the embedding
of $\PP^{a_1}\times\cdots\times\PP^{a_q}$, via the Segre map.
The (projectification of the) cone of tensors of rank $k$ is an open dense
subset of the secant variety $S_k(X)$.

We recall from \cite{CO} def. 2.1 the following

\begin{defn} $X$ is called $k$-identifiable if the general element 
of  $S_k(X)$ has a unique expression
as the sum of $k$ elements of $X$.
\end{defn}

Thus, in our notation, we say that $\PP^{a_1}\times\cdots\times\PP^{a_q}$ is $k$-identifiable
if the general tensors in  $\CC^{a_1+1}\otimes\cdots\otimes\CC^{a_q+1}$ of rank $k$
has a unique decomposition as a sum of simple tensors.
\smallskip

A complete list  of known Segre varieties $X=\PP(\CC^{a_1+1})\times\PP(\CC^{a_2+1})\times\PP(\CC^{a_3+1})$, 
with $1\leq a_1\leq a_2\leq a_3\leq 6$, for which a computer based algorithm 
does not prove the $k$-identifiability, is provided in \cite{CO}, \S 5, see also \cite{BCO} \S 7.
The list corresponds to the case of tensors of {dimension} $3$, for which
the algorithm cannot prove the uniqueness of the decomposition.

In all the examples, except for two of them, it is indeed well known
that general tensors of rank $k$ have infinitely many decompositions. 

The two remaining cases are listed below:

{\small
$$\begin{array}{lccl}
(a_1,a_2,a_3)&k\\
\\
\hline\\
(3,3,3)&6 \\
\textrm{}&&& \\
\hline\\ 
(2,5,5)&8 
\end{array}$$}

In the first case, the effective proof that $X$ is not $6$-identifiable
(and the general tensor of rank $6$ has exactly $2$ decompositions)
is contained in \cite{CO}, Theorem 1.3.

The latter case needs an ''ad hoc'' analysis which  
is the target of the present note.

\smallskip

Our main tool is to prove the existence of particular,
very degenerate subvarieties $Y$, through $k$ general points 
of the Segre variety $X=\PP(\CC^{a_1+1})\times\PP(\CC^{a_2+1})\times\PP(\CC^{a_3+1})$.

Indeed, we recall the following:

\begin{thm} \label{inverseweak} Let $X$ be a  projective,
irreducible non--degenerate variety of dimension $n$ in
$\PP^r$, $r>nk+k-1$. Suppose that for any general $k$-tuples
of points $x_1,\dots,x_k \in  X$ one can find a subvariety
$Y$ of  pure  dimension $m>0$  containing the points
$x_1,\dots,x_k $, whose span has dimension
$$ \dim(\langle Y\rangle) = km+k-1.$$
Assume that $S_k(Y)=\langle W\rangle$ and moreover assume that 
through a general point of $\langle Y\rangle$ one finds $\mu_k>1$
$k$-secant $(k-1)$-linear spaces.

Then $X$ is not $k$-identifiable. Indeed through a general point 
of $S_k(X)$ one finds at least $\mu_k$ $k$-secant $(k-1)$-linear spaces.
\end{thm}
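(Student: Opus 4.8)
The plan is to transfer the multiplicity of decompositions available on the small variety $Y$ up to the whole of $X$, by a genericity argument on an incidence correspondence. Recall first that $k$-identifiability of $X$ is equivalent to the general point of $S_k(X)$ lying on a \emph{unique} $k$-secant $(k-1)$-plane spanned by points of $X$. Introducing the abstract $k$-secant variety
$$\mathcal{AS}_k(X)=\overline{\{((x_1,\dots,x_k),p):\ x_i\in X,\ p\in\langle x_1,\dots,x_k\rangle\}},$$
a $k$-secant $(k-1)$-plane through $p$ is exactly a decomposition of $p$, and the fibres of the projection $\rho\colon\mathcal{AS}_k(X)\to S_k(X)$ record these. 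Thus it suffices to produce, for the general $p\in S_k(X)$, at least $\mu_k$ distinct such planes. The hypothesis $r>nk+k-1$ guarantees $\dim\mathcal{AS}_k(X)=nk+k-1<r$, so $S_k(X)\subsetneq\PP^r$ and the count is meaningful.

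First I would build the incidence variety recording the spans of the $Y$'s. Fix an irreducible family of the subvarieties $Y$ produced by the hypothesis, parametrised (at least generically) by the $k$-tuple $\underline{x}=(x_1,\dots,x_k)$, and set
$$\mathcal{J}=\overline{\{(\underline{x},p)\in X^k\times\PP^r:\ p\in\langle Y(\underline{x})\rangle\}},$$
which, over the dense open locus of general tuples in $X^k$, is a $\PP^{mk+k-1}$-bundle; hence $\mathcal{J}$ is irreducible of dimension $nk+mk+k-1$. Writing $\pi\colon\mathcal{J}\to X^k$ and $q\colon\mathcal{J}\to\PP^r$ for the two projections, we have $\pi^{-1}(\underline x)=\{\underline x\}\times\langle Y(\underline x)\rangle$, and since $Y(\underline x)\subseteq X$ gives $\langle Y(\underline x)\rangle=S_k(Y(\underline x))\subseteq S_k(X)$, also $q(\mathcal{J})\subseteq S_k(X)$. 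The next step is to check that $q$ is dominant. Lifting a general $p\in S_k(X)$ to the generic point of the irreducible variety $\mathcal{AS}_k(X)$, which dominates both $S_k(X)$ and $X^k$, shows that a general $p$ admits a decomposition $p\in\langle x_1,\dots,x_k\rangle$ with $x_1,\dots,x_k$ general in $X$. For such general points the hypothesis furnishes $Y=Y(\underline x)$ with $\langle x_1,\dots,x_k\rangle\subseteq\langle Y\rangle$, whence $p\in\langle Y\rangle$ and $(\underline x,p)\in\mathcal{J}$; therefore $q$ is dominant.

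The heart of the argument, and the step I expect to be the main obstacle, is the transfer of genericity: because the fibres of $q$ have positive dimension $mk$, one cannot conclude by a naive dimension count on images that the general $p\in S_k(X)$ is a general point of the corresponding $\langle Y\rangle$. Instead I would argue as follows. For each general $\underline x$, the assumption $S_k(Y)=\langle Y\rangle$ together with the existence of $\mu_k>1$ $k$-secant planes through the general point of $\langle Y\rangle$ means that the ``bad'' locus $B(\underline x)\subsetneq\langle Y(\underline x)\rangle$, consisting of points lying on fewer than $\mu_k$ $k$-secant $(k-1)$-planes to $Y$, is a proper closed subset of the fibre $\pi^{-1}(\underline x)$. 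Consequently $\{(\underline x,p):p\in B(\underline x)\}$ has dimension strictly less than $\dim\mathcal{J}$, so its complement $V\subseteq\mathcal{J}$ is dense open and contains the generic point of $\mathcal{J}$.

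Since $q$ is dominant and $\mathcal{J}$ is irreducible, the generic point of $\mathcal{J}$ maps to the generic point of $S_k(X)$; hence the general $p\in S_k(X)$ admits a lift $(\underline x,p)\in V$, that is, $p$ is a general point of $\langle Y(\underline x)\rangle$ lying on $\mu_k$ distinct $k$-secant $(k-1)$-planes to $Y$. As $Y\subseteq X$, these are $\mu_k$ distinct $k$-secant $(k-1)$-planes to $X$ through $p$, giving at least $\mu_k>1$ decompositions of the general point of $S_k(X)$. Thus $X$ is not $k$-identifiable and through its general point pass at least $\mu_k$ $k$-secant $(k-1)$-spaces, as claimed; compare \cite{CC} Theorem 2.9. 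I emphasise that the only delicate point is the generic-point transfer in the last two paragraphs, which replaces the tempting but invalid comparison of image dimensions.
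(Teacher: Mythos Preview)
Your argument is correct. The paper itself offers no proof beyond the single line ``It is essentially Theorem 2.9 of \cite{CC}'', so you have in effect written out what the authors deferred to that reference. The incidence-correspondence set-up, the dominance of $q$ established via the abstract $k$-secant variety, and the generic-point/Chevalley transfer showing that a general $p\in S_k(X)$ lifts into the good open set $V\subset\mathcal{J}$, are exactly the ingredients required; your explicit warning that the bad-locus bound must be made fibrewise in $\mathcal{J}$ rather than by comparing image dimensions pinpoints the one place a careless argument would collapse.

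One small technical wrinkle worth tightening: the hypothesis only gives \emph{existence} of some $Y$ through each general tuple $\underline{x}$, not a well-defined assignment $\underline{x}\mapsto Y(\underline{x})$. To make $\mathcal{J}$ honest you should parametrise by an irreducible component, dominating $X^k$, of the incidence locus $\{(\underline{x},[Y])\}$ inside $X^k$ times a suitable Hilbert scheme, rather than by $X^k$ directly. Nothing else in your proof changes.
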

\begin{proof} It is essentially Theorem 2.9 of \cite{CC}.
\end{proof}

\section{Verifying the unidentifiability}\label{mainlem}

From this point on, we focus our attention to the vector space $V$
of tensors of type $\CC^3\otimes\CC^6\otimes\CC^6$, which has dimension $108$.
From the projective point of view, simple tensors in $V$ corresponds to points
of  the Segre embedding of $\PP^2\times\PP^5\times\PP^5$ into 
$\PP(\CC^3\otimes\CC^6\otimes\CC^6)=\PP^{107}$.

We also fix the rank $k=8$, i.e. we consider the eighth secant variety $S_8(X)$.
We know that $X$ is not $8$-defective, so that $S_8(X)$ has projective
dimension $103$ (see \cite{CO}, \S 5). This means that the subvariety (cone) of tensors of  
rank $8$ in $V$ has the expected dimension $104$.

Using a computer-based calculation, the guess is that $X$ is $8$-weakly
defective, with a contact variety of dimension $4$ and degree $108$.

In order to verify the guess, we need a series of lemmas.

\begin{lem}\label{P1P2} Fix eight general points $P_1,\dots,P_8$ of $\PP^5$
and fix eight general points $Q_1,\dots,Q_8$ of $\PP^2$. Then there exists
a Segre embedding $s: \PP^2\times\PP^1\to \PP^5$ for which the line
$s(\{Q_i\}\times\PP^1)$ contains $P_i$ for all $i$. In other words, each $P_i$
lies in $s(\PP^2\times \PP^1)$ and $\pi o s^{-1}(P_i)=Q_i$, where
$\pi$ is the projection $\PP^2\times\PP^1\to \PP^2$.
\end{lem}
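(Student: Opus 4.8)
The plan is to translate the statement into linear algebra and then solve a generically underdetermined linear system, the only real content being that the solution can be taken to be an isomorphism. Fix representatives $p_i\in\CC^6$ of the $P_i$ and $q_i\in\CC^3$ of the $Q_i$. Writing $\CC^6=\CC^3\otimes\CC^2$ and identifying $\CC^3\otimes\CC^2$ with $\CC^3\oplus\CC^3$ via a basis of $\CC^2$, a Segre embedding $s:\PP^2\times\PP^1\to\PP^5$ is the same datum as an isomorphism $\phi=(\phi_1,\phi_2):\CC^6\to\CC^3\oplus\CC^3$, and the fibre line $s(\{Q\}\times\PP^1)$ is $\PP\bigl(\phi^{-1}(\langle q\rangle\oplus\langle q\rangle)\bigr)$. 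Hence the requirement $P_i\in s(\{Q_i\}\times\PP^1)$ becomes the two linear conditions $\phi_1(p_i)\in\langle q_i\rangle$ and $\phi_2(p_i)\in\langle q_i\rangle$, and I must produce an isomorphism $\phi=(\phi_1,\phi_2)$ with $\phi_1,\phi_2$ lying in
\[
W=\{\,M\in\mathrm{Hom}(\CC^6,\CC^3)\ :\ M(p_i)\in\langle q_i\rangle\ \text{for all } i\,\}.
\]

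First I would count dimensions. Each condition $M(p_i)\in\langle q_i\rangle$ is the vanishing of the class of $M(p_i)$ in the $2$-dimensional quotient $\CC^3/\langle q_i\rangle$, so $W$ is the kernel of a linear map $\mathrm{Hom}(\CC^6,\CC^3)=\CC^{18}\to\bigoplus_{i=1}^{8}\CC^3/\langle q_i\rangle=\CC^{16}$; thus $\dim W\ge 2$ unconditionally. To obtain equality for general points I would reduce the count: for general $p_i$ the first six form a basis of $\CC^6$, so $M$ is freely prescribed by $M(p_i)=t_iq_i$ for $i=1,\dots,6$, and the two remaining membership conditions for $p_7,p_8$ turn into four linear equations in $(t_1,\dots,t_6)$, whose generic independence I would certify by a single explicit numerical choice of the $p_i,q_i$. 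This gives $\dim W=2$, so the space of candidate pairs $(\phi_1,\phi_2)\in W\oplus W$ has dimension $4$.

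The main obstacle is the invertibility of $\phi$: the system above always has solutions, and the real point is that among them there is a genuine isomorphism rather than a degenerate map. Choosing a basis $M,N$ of $W$ and setting $\phi=(M,N)$, one sees that $\phi$ is an isomorphism if and only if the common base locus $\bigcap_{M\in W}\ker M=\ker M\cap\ker N$ is zero (each $\ker M$ has dimension $\ge 3$ in $\CC^6$, so generically two such kernels are complementary). To establish this for general points I would argue by specialization: for the standard Segre embedding, that is for the special configuration $p_i=q_i\otimes w_i$ with general $q_i\in\CC^3$ and $w_i\in\CC^2$, the identity $\phi_0=\mathrm{id}$ already solves the system, so the base locus is zero there; since vanishing of the base locus and the equality $\dim W=2$ are open conditions, they persist on a dense open set of configurations, which contains the general $(P_i,Q_i)$.

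Cleanly, I would package the transfer of genericity through an incidence variety. Let
\[
\mathcal Z=\{(P_1,\dots,P_8,Q_1,\dots,Q_8,\phi)\ :\ \phi\in\mathrm{GL}_6,\ \phi_1(p_i),\phi_2(p_i)\in\langle q_i\rangle\};
\]
parametrizing a point of $\mathcal Z$ by $\phi\in\mathrm{GL}_6$, by $Q_i\in\PP^2$ and by $w_i\in\PP^1$ (so that $P_i$ is forced to equal $s(Q_i,w_i)$, and conversely $w_i$ is recovered from $\phi(p_i)=q_i\otimes w_i$) exhibits $\mathcal Z$ as irreducible of dimension $36+16+8=60$. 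The projection $\rho$ to the $56$-dimensional space of configurations $(P_i,Q_i)$ has, over each configuration, fibre equal to the invertible part of its solution space; the specialization above provides a point of $\mathcal Z$ where this fibre has dimension exactly $2\dim W=4$, so by upper semicontinuity the generic fibre of $\rho$ is $4$-dimensional and $\rho$ is dominant. Hence a general configuration $(P_i,Q_i)$ admits an invertible $\phi$, which is precisely the sought Segre embedding $s$, and unwinding the identifications yields $\pi\circ s^{-1}(P_i)=Q_i$.
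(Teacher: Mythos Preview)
Your linear–algebraic reduction is sound and leads to a genuinely different proof from the paper's. The paper argues via group actions: it counts $\dim\bigl(\mathrm{Aut}(\PP^5)/(\mathrm{Aut}(\PP^2)\times\mathrm{Aut}(\PP^1))\bigr)=24$, obtains an $8$-dimensional family $\mathcal S$ of Segre threefolds through eight general $P_i\in\PP^5$, and then uses the fact that only the identity of $\mathrm{Aut}(\PP^2)$ fixes four general points to argue that the induced map $\mathcal S\to(\PP^2)^8/\mathrm{Aut}(\PP^2)$ is dominant. Your approach instead linearizes the problem to the $18$-dimensional space $\mathrm{Hom}(\CC^6,\CC^3)$, identifies the solution space $W$ as the kernel of an explicit map to $\CC^{16}$, and runs a clean incidence–variety dimension count. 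Your route has the advantage of being completely explicit and easily computer–verifiable; the paper's is shorter but relies on a somewhat informal transitivity argument.

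There is one point where your two specializations need to be coordinated. You certify $\dim W=2$ at ``a single explicit numerical choice of the $p_i,q_i$'', and separately establish invertibility at the configuration $p_i=q_i\otimes w_i$. For the incidence–variety step you then assert that the fibre of $\rho$ at this second specialization has dimension $2\dim W=4$, which presupposes $\dim W=2$ \emph{there}; but the points $p_i=q_i\otimes w_i$ lie on the Segre and are not general in $(\PP^5)^8$, so this does not follow from the first check. The fix is painless: simply perform the numerical certification of $\dim W=2$ directly at the configuration $p_i=q_i\otimes w_i$ with random $q_i,w_i$ (your own description---six such $p_i$ form a basis of $\CC^6$, and the four residual equations for $i=7,8$ are generically independent---applies verbatim in this case). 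Once both open conditions are witnessed at the \emph{same} point of the configuration space, the semicontinuity/dominance argument goes through as you wrote it.
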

\begin{proof} The embeddings $\PP^2\times\PP^1\to \PP^5$ are parametrized by
the quotient group $G= Aut(\PP^5)/(Aut(\PP^2)\times Aut(\PP^1))$, which has dimension $24$.
Since the eight points $P_i$ are general, we have an $8$-dimensional family $\mathcal S$ 
of embeddings $s$ for which $P_1,\dots,P_8\in s(\PP^2\times\PP^1)$. 
Since the unique automorphism of $\PP^2$ which fixes four general points is the identity,
as $s$ varies in $\mathcal S$, the family of $4$-tuples
$(\pi o s^{-1}(P_1),\dots,\pi o s^{-1}(P_4))$ dominates $(\PP^2)^4$.
Since the group $Aut(\PP^2)$ acts transitively on the points 
$\pi o s^{-1}(P_5),\dots,\pi o s^{-1}(P_8)$, it follows that the orbit
of the set $\{\pi o s^{-1}(P_5),\dots,\pi o s^{-1}(P_8)\}$, under
 $G\times Aut(\PP^2)$, dominates $(\PP^2)^8$. The claim follows.  
\end{proof}

\begin{lem}\label{P1P1P2} Through $8$ general points $x_1,\dots,x_8$
 of $X$ one can find a fourfold $Y$
which corresponds to the embedding of $\PP^2\times\PP^1\times\PP^1$ into $\PP^{39}$,
mapped by divisors of multidegree $(3,1,1)$.
\end{lem}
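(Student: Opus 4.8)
The plan is to realize $Y$ as the image of an explicit map $\PP^2\times\PP^1\times\PP^1\to X$ assembled from two applications of Lemma \ref{P1P2}. Write a general point $x_i\in X=\PP^2\times\PP^5\times\PP^5$ in coordinates as $x_i=(A_i,B_i,C_i)$ with $A_i\in\PP^2$ and $B_i,C_i\in\PP^5$; since the $x_i$ are general, the three tuples $(A_i)$, $(B_i)$, $(C_i)$ are general and mutually independent. First I would apply Lemma \ref{P1P2} with $Q_i=A_i$ and $P_i=B_i$ to obtain a Segre embedding $s\colon\PP^2\times\PP^1\to\PP^5$ into the second factor, with $s(A_i,\beta_i)=B_i$ for suitable $\beta_i\in\PP^1$ and $\pi\circ s^{-1}(B_i)=A_i$. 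Applying it a second time with the same $Q_i=A_i$ but $P_i=C_i$ gives a Segre embedding $t\colon\PP^2\times\PP^1\to\PP^5$ into the third factor, with $t(A_i,\gamma_i)=C_i$ for suitable $\gamma_i\in\PP^1$.

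Next I would define
$$\Psi\colon \PP^2\times\PP^1\times\PP^1\to X,\qquad \Psi(a,\beta,\gamma)=(a,\,s(a,\beta),\,t(a,\gamma)),$$
and set $Y=\Psi(\PP^2\times\PP^1\times\PP^1)$. By construction $\Psi(A_i,\beta_i,\gamma_i)=(A_i,B_i,C_i)=x_i$, so $Y$ passes through all eight points. To identify $Y$ as a Segre--Veronese fourfold I would pull back the Segre bundle $\oo(1,1,1)$ of $X$: on the first factor $\Psi$ is the identity, on the second it is the Segre map $s$ (pulling $\oo(1)$ back to $\oo(1,1,0)$), on the third the Segre map $t$ (pulling $\oo(1)$ back to $\oo(1,0,1)$), so that $\Psi^*\oo(1,1,1)=\oo(3,1,1)$ on $\PP^2\times\PP^1\times\PP^1$. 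As $h^0(\oo(3,1,1))=\binom{5}{2}\cdot 2\cdot 2=40$ and the system is very ample, its complete system embeds the fourfold into $\PP^{39}$.

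The remaining, and main, point is to check that the restriction of $\oo(1,1,1)$ to $Y$ realizes the full $(3,1,1)$-system, equivalently that $Y$ spans a $\PP^{39}$ inside $\PP^{107}$. Here I would show that the restriction map $H^0(X,\oo(1,1,1))\to H^0(Y,\oo(3,1,1))$ is surjective. A section of $\oo(1,1,1)$ is a product $\ell(a)\,m(y)\,n(z)$ of linear forms, which under $\Psi$ becomes $\ell(a)\cdot(m\circ s)(a,\beta)\cdot(n\circ t)(a,\gamma)$. Since $s$ and $t$ are closed Segre immersions (hence linearly normal), $m\circ s$ ranges over all of $H^0(\oo(1,1,0))$ and $n\circ t$ over all of $H^0(\oo(1,0,1))$, so the span of the products equals the image of the multiplication map $H^0(\oo(1,0,0))\otimes H^0(\oo(1,1,0))\otimes H^0(\oo(1,0,1))\to H^0(\oo(3,1,1))$. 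This factors as the tensor product of the surjective cubic multiplication $H^0(\oo_{\PP^2}(1))^{\otimes 3}\to H^0(\oo_{\PP^2}(3))$ with two isomorphisms on the $\PP^1$ factors, hence is surjective. Consequently $\dim\langle Y\rangle=39$, and $\Psi$ agrees (up to the linear isomorphism $\langle Y\rangle\cong\PP^{39}$) with the complete $(3,1,1)$ embedding, which is very ample; thus $\Psi$ is a closed immersion and $Y$ is the desired fourfold.

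I expect the genuinely delicate step not to be the present assembly but the existence of $s$ and $t$, which is exactly Lemma \ref{P1P2}; granting it, the only real verification here is the surjectivity/projective-normality argument identifying $\langle Y\rangle$ with $\PP^{39}$, together with the bookkeeping that $\Psi$ is injective. The surjectivity of the restriction map is the crucial thing to get right, since it is what guarantees the span has the exact dimension $km+k-1=8\cdot4+7=39$ required to feed $Y$ into Theorem \ref{inverseweak}.
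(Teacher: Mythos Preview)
Your argument is correct and follows essentially the same route as the paper: write $x_i=(A_i,B_i,C_i)$, apply Lemma~\ref{P1P2} twice with the same $\PP^2$-points $A_i$ to produce Segre embeddings $s,t\colon\PP^2\times\PP^1\to\PP^5$ into the second and third factors, and then assemble $\Psi=(\mathrm{id},s,t)$ so that $\Psi^*\oo(1,1,1)=\oo(3,1,1)$. The paper leaves the identification of $\langle Y\rangle$ with $\PP^{39}$ implicit, whereas you supply an explicit surjectivity argument for the restriction $H^0(X,\oo(1,1,1))\to H^0(\oo(3,1,1))$; this extra care is welcome and correct.
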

\begin{proof} We send $\PP^2\times\PP^1\times\PP^1$ to the three factors
$\PP^2$, $\PP^5$ and $\PP^5$, by using  the identity on $\PP^2$ and divisors
$D=(1,1,0)$ and $D'=(1,0,1)$  respectively. Thus, we need to prove
that we can arrange this map $\zeta$ so that the image passes through 
$8$ general points of $X$.

The choice of eight general points in $\PP^2\times\PP^5\times\PP^5$ corresponds
to the choice of $8$ general points in each factor. By the previous Lemma, 
for a general choice of points $Q_1,\dots,Q_8\in\PP^2$, $P_1,\dots,P_8
\in \PP^5$ and $P'_1,\dots,P'_8 \in \PP^5$, we can find divisors $D,D'$,
which define Segre embeddings $s,s'$ of $\PP^2\times\PP^1$ into $\PP^5$, for which 
each $P_i$ (resp. each $P'_i$) lies in the line $s(\{Q_i\}\times\PP^1)$
(resp. $s'(\{Q_i\}\times\PP^1)$).

It follows that $Y=\zeta(\PP^2\times\PP^1\times \PP^1)$ passes through
each point $x_i$, $i=1,\dots,8$.
\end{proof}

The following Lemma would be easy, provided one knows a table of $4$-dimensional
varieties in $\PP^{39}$, whose $8$-th secant order is different from $1$.
Since the table is missing, we need to compute directly what happens
for the Segre product $\PP^2\times\PP^1\times\PP^1$.

\begin{lem} \label{noOASS} Let $Y$ be an embedding of $\PP^2\times\PP^1\times\PP^1$ 
into $\PP^{39}$, through a divisor of type $(3,1,1)$. 
Then through a general point $y\in\PP^{39}$ one can draw
at least $6$ spaces of dimension $7$,  which are $8$-secant to $Y$.
\end{lem}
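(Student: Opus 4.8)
The plan is to work in the intrinsic coordinates of $\langle Y\rangle$. Since $Y$ is the Segre--Veronese image of $\PP^2\times\PP^1\times\PP^1$ by forms of type $(3,1,1)$, I identify $\langle Y\rangle=\PP^{39}$ with $\PP(\mathrm{Sym}^3\CC^3\otimes\CC^2\otimes\CC^2)$ and the points of $Y$ with the rank-one elements $c^3\otimes a\otimes b$. Drawing an $8$-secant $\PP^7$ through a general $y$ then amounts to writing the corresponding general tensor $T\in \mathrm{Sym}^3\CC^3\otimes\CC^2\otimes\CC^2$ as $\sum_{i=1}^8 c_i^3\otimes a_i\otimes b_i$, and I must produce at least $6$ such expressions with distinct supports. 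A parameter count ($8\cdot 5=40=\dim \mathrm{Sym}^3\CC^3\otimes\CC^2\otimes\CC^2$), together with the fact that $S_8(Y)=\PP^{39}$ (so $Y$ is not $8$-defective, as recorded above), shows that the natural map from the abstract variety of $8$-secant $\PP^7$'s to $\PP^{39}$ is dominant between varieties of the same dimension, hence generically finite; so the quantity I am after is the degree of this map, and it suffices to bound it below by $6$.

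Next I would linearize the problem through the flattening $\phi_T\colon(\CC^2\otimes\CC^2)^*\to \mathrm{Sym}^3\CC^3$, $\phi_T(\xi)=\sum_i\xi(a_i\otimes b_i)\,c_i^3$. For general $T$ its image $W$ is a $4$-dimensional web of plane cubics, and the Segre structure of $\CC^2\otimes\CC^2$ equips $\PP(W)=\PP^3$ with an intrinsic smooth quadric $Q\cong\PP^1\times\PP^1$; both $W$ and $Q$ depend only on $T$. A decomposition forces $W\subseteq\langle c_1^3,\dots,c_8^3\rangle=:\Sigma$; dually, the pencil $\Sigma^\perp$ of cubics through $c_1,\dots,c_8$ must lie in $\PP(W^\perp)=\PP^5$, so the $c_i$ are eight of the nine base points of a pencil of cubics contained in the fixed $\PP^5$. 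Writing $G_q=\sum_i\lambda_i(q)\,c_i^3$ for the member of $W$ indexed by $q\in Q$, the coefficient $\lambda_i$ is the restriction to $Q$ of the coordinate functional dual to $c_i^3$, i.e. a $(1,1)$-form; the summand is a genuine simple tensor exactly when $\lambda_i=\langle a_i,\cdot\rangle\langle b_i,\cdot\rangle$ factors, that is, when the hyperplane $H_i=W\cap\langle c_j^3:j\ne i\rangle\subset\PP(W)$ is tangent to $Q$. Thus decompositions of $T$ correspond to configurations of eight points $c_i\in\PP^2$ that are base points of a pencil in $\PP(W^\perp)$ and whose eight induced hyperplanes $H_i$ all lie on the dual quadric $Q^*\subset\PP(W^*)$.

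This exhibits the degree I want as the intersection number, on the $8$-dimensional family of such pencils, of eight tangency conditions, each of which is (the pullback of) the quadric $Q^*$; in particular the number is finite and I only need to see that it is at least $6$. Rather than evaluate this intersection symbolically on $\mathrm{Gr}(2,6)$ --- which is delicate because it involves the monodromy of the nine base points and the labelling entering the tangency loci --- I would establish the bound by a single explicit computation. Concretely, I would choose an explicit generic web $W$ and quadric $Q$ (equivalently, an explicit tensor $T_0$), solve the resulting polynomial system for the eight base points and the eight tangencies, and exhibit at least $6$ solutions with pairwise distinct supports, each an isolated (reduced) point of the solution scheme. Since the fibre cardinality of the generically finite secant map is lower semicontinuous, the existence of one tensor with at least $6$ isolated decompositions forces the general $T$ --- and hence the general $y\in\PP^{39}$ --- to admit at least $6$ distinct $8$-secant $\PP^7$'s.

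The hard part is precisely this enumeration: producing the six solutions and checking that they are distinct and isolated. The difficulty is intrinsic --- the family of pencils is $8$-dimensional and the eight tangency conditions interact through the nine base points --- which is exactly why the method yields a clean lower bound but not the exact number of decompositions. If one prefers to avoid a machine computation, an alternative is to build a symmetric instance of $(W,Q)$ on which a finite symmetry group of order divisible by $6$ permutes the base-point and tangency configurations, so that six solutions appear as a single orbit; either way, the crux is to certify that at least six genuinely distinct simple decompositions occur and survive to the general tensor.
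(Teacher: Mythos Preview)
Your reformulation via the flattening $\phi_T\colon(\CC^2\otimes\CC^2)^*\to\mathrm{Sym}^3\CC^3$ is correct and gives a pleasant description of decompositions in terms of pencils of plane cubics and tangency to the intrinsic quadric $Q^*$. The semicontinuity step is also fine in the form you need: if a specific tensor has six decompositions that are isolated and reduced in the fibre (so the secant map is \'etale there), then the generic fibre has at least six points.

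However, the proposal does not actually prove the lemma: the entire content lies in the enumeration you defer. You write that you ``would choose an explicit generic web $W$ and quadric $Q$ \dots and exhibit at least $6$ solutions'', or alternatively build a symmetric instance with an orbit of size six, but neither is carried out. As you yourself note, this is the hard part, and without it nothing is established beyond the (already known) fact that the number of decompositions is finite. So as it stands there is a genuine gap: no six decompositions are produced, and the symmetry route is only a suggestion.

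For comparison, the paper takes a different and somewhat more economical path. Instead of working with the full secant map, it invokes Theorem~4.2(vi) of \cite{CR}: the number of $8$-secant $\PP^7$'s through a general point of $\PP^{39}$ is bounded below by the degree of the tangential projection $\tau_{y_1,\dots,y_7}\colon Y\dashrightarrow\PP^4$ from seven general points. This reduces the problem to computing $\deg\tau_{y_1,\dots,y_7}$, which is done by an explicit Macaulay2 computation exhibiting one $7$-tuple $(\overline{y}_1,\dots,\overline{y}_7)$ and one point $\overline p\in\PP^4$ over which the fibre consists of six reduced points; Stein factorization then gives $\deg\tau_{y_1,\dots,y_7}=6$ generically. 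The advantage of the paper's route is that the object whose degree is computed is a rational map from the fourfold $Y$ to $\PP^4$, which is lighter than parametrizing pencils in $\mathrm{Gr}(2,W^\perp)$ together with labelled base points and eight quadric conditions. Your framework could in principle be pushed through with a machine check of comparable size, but until that check (or the promised symmetry argument) is actually performed, the proof is incomplete.
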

\begin{proof} We consider the tangential projection from the tangent spaces at $7$ general points $\{y_1,\ldots,y_7\}$
of $Y$, which is a rational map $\tau_{y_1,\ldots,y_7}\map Y\to\PP^{4}$.
By the Theorem 4.2 (vi) of \cite{CR} we have that the number of seven dimensional spaces
which are $8$-secant to $Y$ and contain a general point $y\in\PP^{39}$ is $\ge \deg\tau_{y_1,\ldots,y_7}$, for a general choice of points $\{y_1,\ldots,y_7\}$.
So it is enough to show that $\deg\tau_{y_1,\ldots,y_7}=6$.

A computer based algorithm, implemented in M2 \cite{GS}, which is available in the ancillary files of the 
arXiv submission of this paper,
shows that there exists a 7-uple $\{\overline{y}_1,\ldots, \overline{y}_7\}$ and a point $\overline{p}\in\PP^4$ such that
the fiber $\tau_{\overline{y}_1,\ldots,\overline{y}_7}^{-1}(\overline{p})$ consists of $6$ reduced points.

Consider the rational map
$$\tau:(Y)^7\times\PP^{39}\map (Y)^7\times\PP^4$$
induced by the tangential projection. That is
$$\tau(y_1,\ldots,y_7,p)=(y_1,\ldots,y_7,\tau_{y_1,\ldots,y_7}(p)).$$
Then, after resolving the indeterminacy of the map $\tau$, 
 we get, from the Stein factorization, that the general fiber of $\tau$ consists of six points,
so that $\deg\tau_{y_1,\ldots,y_7}=6$.
\end{proof}

\begin{rem} Computer experiments show that the base locus of $\tau_{Y,s}$ consists
of $2s$ lines for $s\le 6$ (each tangent space at a point $y\in Y$ meets $Y$ in two lines)
and consists of $14$ lines plus $4$ extra points for $s=7$. We do not know how to prove theoretically
the existence of these $4$ points in the base locus.
\end{rem}

Now we can use the approach of \cite{CC} to prove that $X$ is not $8$-identifiable.

\begin{thm}\label{xnot8} $X$ is not $8$-identifiable. Through a general point 
$Q\in\PP^{107}$ one can draw
at least $6$ spaces of dimension $7$,  which are $8$-secant to $X$.
\end{thm}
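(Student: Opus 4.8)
The plan is to deduce Theorem \ref{xnot8} directly from the machinery already assembled, by invoking Theorem \ref{inverseweak} with the explicit subvariety $Y$ produced in the preceding lemmas. First I would set up the numerical data: here $X=\PP^2\times\PP^5\times\PP^5$ sits non-degenerately in $\PP^{107}$ with $n=\dim X=12$, and we fix $k=8$. The hypothesis $r>nk+k-1$ of Theorem \ref{inverseweak} reads $107>12\cdot 8+8-1=103$, which holds. So the framework applies provided I supply a suitable $Y$.

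\medskip

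Next I would invoke Lemma \ref{P1P1P2}: through $8$ general points $x_1,\dots,x_8$ of $X$ there passes a fourfold $Y$, the Segre--Veronese image of $\PP^2\times\PP^1\times\PP^1$ embedded by divisors of multidegree $(3,1,1)$, spanning a $\PP^{39}$. I must check that $Y$ meets the dimensional requirement $\dim\langle Y\rangle=km+k-1$ of Theorem \ref{inverseweak}: with $m=\dim Y=4$ and $k=8$ this is $8\cdot 4+8-1=39$, matching exactly the stated span $\PP^{39}$. I would remark that the full linear system of forms of type $(3,1,1)$ has dimension $\binom{5}{2}\cdot 2\cdot 2-1=39$, so $Y$ is linearly normal in its span, confirming $\dim\langle Y\rangle=39$. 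Then I would set $\langle W\rangle=S_8(Y)=\langle Y\rangle=\PP^{39}$, since $Y$ is nondegenerate in $\PP^{39}$ and $8$ points already fill out the ambient space generically; in particular the $8$-th secant variety of $Y$ equals its span.

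\medskip

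The remaining hypothesis of Theorem \ref{inverseweak} is that through a general point of $\langle Y\rangle=\PP^{39}$ one finds $\mu_8>1$ many $8$-secant $7$-planes to $Y$. This is precisely the content of Lemma \ref{noOASS}, which gives $\mu_8\ge 6>1$. Having verified every hypothesis, Theorem \ref{inverseweak} yields that $X$ is not $8$-identifiable, and moreover that through a general point $Q\in\PP^{107}=S_8(X)$ (recall $\dim S_8(X)=103$, so $S_8(X)$ is a proper subvariety, but a general tensor of rank $8$ lies in it) one finds at least $\mu_8\ge 6$ many $8$-secant $7$-planes to $X$. This is exactly the assertion of Theorem \ref{xnot8}.

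\medskip

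The main obstacle, conceptually, is not in this final deduction—which is essentially a bookkeeping application of Theorem \ref{inverseweak}—but in the inputs already proved: the existence of the degenerate fourfold $Y$ through $8$ general points (Lemma \ref{P1P1P2}, resting on the parameter count in Lemma \ref{P1P2}) and the computation $\mu_8\ge 6$ of the tangential-projection degree (Lemma \ref{noOASS}). The one point I would be careful about in writing the proof is confirming that a general point $Q$ of $S_8(X)\subset\PP^{107}$ genuinely inherits the multiple $8$-secant planes from the corresponding behavior inside $\langle Y\rangle$; this is the precise statement packaged into the conclusion of Theorem \ref{inverseweak}, so I would simply cite that theorem rather than re-derive the specialization argument of \cite{CC}.
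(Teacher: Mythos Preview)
Your proposal is correct and follows essentially the same route as the paper: both arguments feed Lemma \ref{P1P1P2} (existence of the fourfold $Y$ through $8$ general points) and Lemma \ref{noOASS} (the bound $\mu_8\ge 6$) into the weak-defectivity machinery of \cite{CC}. The only cosmetic difference is that you explicitly verify the hypotheses of Theorem \ref{inverseweak} and cite it, whereas the paper unwinds that citation into a short direct argument (pick $Q$ general in $\langle P_1,\dots,P_8\rangle\subset\langle Y\rangle=\PP^{39}$, apply Lemma \ref{noOASS}, and observe that $8$-secant spaces to $Y$ are a fortiori $8$-secant to $X$).
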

\begin{proof} Fix $8$ general points $P_1,\dots,P_8\in X$ and a general point 
$Q\in\langle P_1,\dots,P_8\rangle$, so that $Q$ is a general point of the $8$-th secant 
variety of $X$. By Lemma \ref{P1P1P2}, the eight points are contained in the image 
$Y\subset X$ of  a Segre-Veronese embedding  of $\PP^2\times\PP^1\times\PP^1$ 
through a divisor of type $(3,1,1)$.
$Y$ spans a $\PP^{39}$, which clearly contains $Q$, and $Q$ is a general point of $\PP^{39}$. 
By Lemma \ref{noOASS}, one finds $6$ linear spaces of (projective)
dimension $7$, which are $8$-secant
to $Y$ and contain $Q$. Since these spaces are also $8$-secant to $X$, the claim follows.
\end{proof}

From a geometric point of view, Theorem 2.4 of \cite{CC} implies the following.

\begin{cor} $X$ is $8$-weakly defective. A general hyperplane
which is tangent to $X$ at $8$ general point, is also tangent along a subvariety
$Y$ of dimension $4$, described above.
\end{cor}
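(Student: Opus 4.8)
The plan is to derive both assertions from Theorem 2.4 of \cite{CC} together with the facts already established in this section. First recall that $S_8(X)$ has the expected dimension $103$ in $\PP^{107}$, so $X$ is not $8$-defective, while Theorem \ref{xnot8} shows that $X$ is not $8$-identifiable. The cited theorem of Chiantini--Ciliberto is precisely the tool that, under the non-defectivity hypothesis, converts the failure of $k$-identifiability together with the presence of a positive--dimensional subvariety through the $k$ points into $k$-weak defectivity; applying it with $k=8$ yields the first assertion at once.

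For the second assertion I would identify the contact variety explicitly with the fourfold $Y$ of Lemma \ref{P1P1P2}. Fix general points $x_1,\dots,x_8\in X$ and a general hyperplane $H$ tangent to $X$ at each $x_i$. By Terracini's lemma (\cite{Terr}), $H$ contains $\langle T_{x_1}X,\dots,T_{x_8}X\rangle=T_Q S_8(X)$ for a general $Q\in\langle x_1,\dots,x_8\rangle$ (a $103$-dimensional space, consistent with non-defectivity), so the contact locus $\Sigma=\{x\in X:\ T_xX\subseteq H\}$ is well defined and, by Theorem 2.4 of \cite{CC}, has pure positive dimension and passes through the eight points.

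The core step is to show $\Sigma\supseteq Y$, where $Y$ is the Segre--Veronese image of $\PP^2\times\PP^1\times\PP^1$ of multidegree $(3,1,1)$ produced by Lemma \ref{P1P1P2}. Here I would use that $\langle Y\rangle=\PP^{39}$ and that, by Lemma \ref{noOASS}, through a general point of $\langle Y\rangle$ there pass at least $6>1$ distinct $8$-secant $7$-spaces of $Y$. This multiplicity is precisely the hypothesis $\mu_8>1$ of Theorem \ref{inverseweak}, and it is what pins the tangency onto the whole of $Y$: the tangent space $T_yX$ at every $y\in Y$ must lie in $H$, whence $Y\subseteq\Sigma$. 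Since $\dim Y=4$ while a strictly larger contact locus would contradict the non-defectivity of $S_8(X)$, one concludes $\dim\Sigma=4$ and $\Sigma=Y$, matching the contact variety of dimension $4$ (and degree $108$) predicted by the computer analysis.

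The main obstacle is exactly this core step: passing from merely \emph{positive--dimensional} contact, which weak defectivity alone guarantees, to the precise statement that the contact locus is the specific fourfold $Y$. This is where the interplay between Terracini's lemma on $X$ and on $Y\subset X$ must be controlled, so that the tangency of $H$ along the eight points propagates along all of $Y$; Theorem 2.4 of \cite{CC} is the mechanism that carries out this propagation, and verifying that its hypotheses are met in our situation is the only nontrivial point.
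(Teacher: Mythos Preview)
Your approach matches the paper's, which offers no argument beyond the single sentence ``Theorem 2.4 of \cite{CC} implies the following.'' Your derivation of the first assertion (not $8$-defective $+$ not $8$-identifiable $\Rightarrow$ $8$-weakly defective, via the contrapositive of \cite{CC} Theorem 2.4) is exactly what the paper has in mind.

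Two minor comments on the second assertion. First, the corollary only asserts that the general tangent hyperplane is tangent \emph{along} $Y$, i.e.\ $Y\subseteq\Sigma$; you aim for the stronger $\Sigma=Y$, which is consistent with the computer evidence (dimension $4$, degree $108=\tfrac{4!}{2!}\,3^2$) but is not what is being claimed. Second, your bound ``$\dim\Sigma>4$ would contradict non-defectivity'' is not a standard consequence of non-defectivity alone and would need a separate argument; likewise, the ``propagation'' of tangency from the eight points to all of $Y$ via $\mu_8>1$ is not something Theorem 2.4 of \cite{CC} states directly. That said, the paper does not supply these steps either---it simply defers to \cite{CC}---so on the level of what is actually written, your outline is at least as complete as the paper's own proof.
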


\begin{rem} One would like to conclude that through a general point
of the $8$-secant variety of $X$ one can find exactly $6$ spaces of dimension $7$,
which are $8$-secant to $X$.

In other words, one would like to conclude that a general tensor of type $(3,6,6)$
and rank $8$ can be written as a sum of $8$ decomposable tensors in exactly $6$ ways. 

Unfortunately, we can only conclude that there are {\it at least} $6$ decompositions.
One reason is that the lower bound with $\deg\tau_{Y,7}$ 
considered in the proof can be strict. For example,
if $Y$ is the $8$-Veronese embedding of $\PP^2$, 
then Ranestad and Schreyer prove (see Theorem 1.7 (iv) of \cite{RS})
that a general polynomial of degree $8$ has exactly $16$ decompositions
as the sum of $15$ powers of linear forms. On the other hand,
 the tangential projection from $14$ points has base locus given by
the $14$ points themselves and so its degree is $8^2-14\cdot 4=8<16$.

Moreover, there could be more than one Segre-Veronese variety like $Y$,
passing through $8$ general points of $X$.

\end{rem} 

Using Terracini's interpretation of the secant varieties of Segre varieties
(\cite{Terr}) and the trick of \cite{BBCC}, we can translate the main Theorem
into a theorem about linear systems of matrices.

\begin{cor} Let $\mathcal M$ be a linear system of
$5\times 5$ matrices, with (affine) dimension $3$.
Assume that $\mathcal M$ has rank $8$, i.e. there are $8$
matrices of rank $1$ which generate all the elements of $\mathcal M$.
Then there are at least $6$ sets of $8$ rank $1$ matrices, whose spans
contain $\mathcal M$.
\end{cor}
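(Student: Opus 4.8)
The plan is to deduce the statement from Theorem \ref{xnot8} through Terracini's interpretation and the slicing trick of \cite{BBCC}, with no further geometry. First I would slice a general tensor $T\in\CC^3\otimes\CC^6\otimes\CC^6$ along the first factor: fixing a basis $e_1,e_2,e_3$ of $\CC^3$ one writes $T=\sum_{j=1}^{3}e_j\otimes M_j$ with $M_j\in\CC^6\otimes\CC^6$, and the three slices span a linear system $\mathcal M=\langle M_1,M_2,M_3\rangle$ of affine dimension $3$. These are the matrices of the statement (here a point of the $\PP^5$ factors records a vector of $\CC^6$, so the rank one matrices are exactly the points of $\PP^5\times\PP^5\subset\PP(\CC^6\otimes\CC^6)$), and a rank one tensor $a\otimes b\otimes c$ slices to the three proportional matrices $a_j\,bc^{\top}$, hence contributes the single rank one matrix $bc^{\top}$.

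Next I would upgrade the slicing to a bijection between the decompositions of $T$ and the sets of eight rank one matrices whose span contains $\mathcal M$. A decomposition $T=\sum_{i=1}^{8}a_i\otimes b_i\otimes c_i$ gives $M_j=\sum_i (a_i)_j\, b_ic_i^{\top}$, so $\mathcal M\subseteq\langle b_1c_1^{\top},\dots,b_8c_8^{\top}\rangle$; conversely, from rank one matrices $R_1,\dots,R_8$ with $\mathcal M\subseteq\langle R_1,\dots,R_8\rangle$ one writes $M_j=\sum_i\lambda_{ji}R_i$ and recovers $T=\sum_i(\lambda_{1i},\lambda_{2i},\lambda_{3i})\otimes R_i$. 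For general $T$ of rank $8$ the eight rank one matrices are linearly independent in $\CC^6\otimes\CC^6$, so the $\lambda_{ji}$ are uniquely determined and the recovered decomposition depends only on the set $\{R_i\}$. Thus distinct decompositions correspond to distinct spanning sets and the two counts agree.

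Then I would invoke Theorem \ref{xnot8}. Slicing is a linear isomorphism of the ambient spaces which carries rank one tensors to rank one matrices and preserves rank, so a general $\mathcal M$ of rank $8$ is, up to a change of basis of $\CC^3$ under which both the rank and the number of decompositions are invariant, the slice of a general point $T$ of $S_8(X)$; it therefore suffices to treat general $T$. By Theorem \ref{xnot8}, through a general $Q\in\langle P_1,\dots,P_8\rangle\subset\PP^{107}$ there pass at least $6$ linear $7$-spaces that are $8$-secant to $X$, i.e. the general rank $8$ tensor admits at least $6$ decompositions. By the bijection of the previous paragraph these produce at least $6$ distinct sets of eight rank one matrices whose spans contain $\mathcal M$, which is the claim.

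The main obstacle is the genericity bookkeeping of the translation, not any new geometry. I must verify that a general linear system $\mathcal M$ of rank $8$ genuinely arises as a slice and that its matrix rank equals the tensor rank of the corresponding $T$ --- precisely the content borrowed from \cite{BBCC} --- so that a general $\mathcal M$ may be replaced by a general $T$. I must also check that the $\ge 6$ distinct $8$-secant $7$-spaces do not collapse to fewer spanning sets: two distinct such $7$-spaces meet $X$ in two distinct $8$-tuples of rank one tensors (for general points the eight intersection points span the $7$-space, so the $7$-space determines and is determined by the $8$-tuple), and since for general $T$ the associated matrices $b_ic_i^{\top}$ are linearly independent, the net directions $a_i$ are reconstructed uniquely; hence the spanning sets are pairwise distinct. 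Granting these two routine points, the lower bound $6$ transfers verbatim from tensors to linear systems of matrices.
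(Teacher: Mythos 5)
Your proposal is correct and follows essentially the same route as the paper: the paper disposes of this corollary by citing the main Theorem of \cite{BBCC} (Remark 4.2 iii there) applied to Theorem \ref{xnot8}, and your slicing bijection between decompositions of $T\in\CC^3\otimes\CC^6\otimes\CC^6$ and $8$-tuples of rank one matrices spanning $\mathcal M$ --- including the genericity transfer and the distinctness check via linear independence of the slices --- is precisely the content of that citation, written out explicitly. Note only that your argument (correctly) concerns $6\times 6$ matrices, i.e.\ points of $\PP^5\times\PP^5\subset\PP(\CC^6\otimes\CC^6)$; the ``$5\times 5$'' in the statement is evidently a slip coming from the projective dimensions of the factors.
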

\begin{proof} It is a straightforward consequence of the main Theorem of \cite{BBCC},
see Remark 4.2 iii) there.
\end{proof}


\begin{thebibliography}{BBCC13}

\bibitem[AOP09]{AOP}
Abo H., Ottaviani G., Peterson C.
\newblock \emph{Induction for secant varieties of {S}egre varieties.}
\newblock { Trans. Amer. Math. Soc.}, 361 (2) (2009) 767--792.

{\bibitem[AMR09]{AMR09} E. Allman, C. Matias, J. Rhodes, 
\newblock\emph{Identifiability of parameters in latent structure models with many observed variables.}
\newblock {Annals of Statistics}, 37 no. 6A (2009), 3099--3132.}

\bibitem[BBCC13] {BBCC}
Ballico E., Bernardi A., Catalisano M.V., Chiantini L.
\newblock \emph{Grassman secants, identifiability, and linear systems of tensors.}
\newblock{Lin. Alg. Applic.}, 438 (2013) 121--135.

\bibitem[BC12] {BC}
Bocci C., Chiantini L.
\newblock \emph{On the identifiability of binary Segre products.}
\newblock{J. Alg. Geom.}, 22 (2013) 1--11.
 
\bibitem[BCO] {BCO}
Bocci C., Chiantini L., Ottaviani G.
\newblock \emph{Refined methods for the identifiability of tensors.},
 arXiv:1303.6915

\bibitem[CC06] {CC}
Chiantini L., Ciliberto C..
\newblock \emph{On the $k$-th secant order of a  projective variety.}
\newblock{J. London Math. Soc.}, 73(2) (2006) 436--454. 

\bibitem[CO12] {CO}
Chiantini L., Ottaviani G.
\newblock \emph{On generic identifiability of 3-tensors of small rank.}
\newblock{SIAM J. Matrix Anal. Appl.}, 33 (3) (2012), 1018--1037. 

\bibitem [CR06] {CR}
Ciliberto C., Russo F.
\newblock \emph{ Varieties with minimal secant degree and linear systems 
of maximal dimension on surfaces.}
\newblock{Adv. Math.}, 200 (1) (2006) 1-–50.

\bibitem[GS]{GS} D.~Grayson and M.~Stillman,
\newblock Macaulay 2, a software system for research in algebraic geometry.
\newblock Available at {\tt www.math.uiuc.edu/Macaulay2/ }.


\bibitem[H77] {AG}
Hartshorne R.
\newblock {\bf Algebraic Geometry.}
\newblock{Springer Graduate Texts in Math.} 52 (1977).

\bibitem[KB09]{KB}
Kolda T., Bader B. 
\newblock \emph{Tensor Decompositions and Applications.} 
\newblock{ SIAM Review}, 51 (3) (2009) 455--500.

\bibitem[Kru77]{K} Kruskal J. B.,
\newblock \emph{Three-way arrays: rank and uniqueness of trilinear decompositions,
with applications to arithmetic complexity and statistics.}
\newblock {Lin. Alg. Applic.}, 18(2) (1977) 95--138. 

\bibitem[Land12]{Land} Landsberg J. M. 
\newblock {\bf The geometry of tensors with applications.}
\newblock {AMS Graduate Studies in Math.} 128 (2012).

\bibitem[M06]{M} Mella M.,
\newblock \emph{Singularities of linear systems and the Waring problem.}
\newblock {Trans. Amer. Math. Soc. }, 358 (2006), 5523-5538.

\bibitem[RS00]{RS} Ranestad K., Schreyer F.O.
\newblock\emph{Varieties of sums of powers.}
\newblock{J. Reine Angew. Math.}, 525 (2000) 147-–181. 

\bibitem[Str83] {Str}
Strassen V.
\newblock \emph{Rank and optimal computation of generic tensors.} 
\newblock {Linear Algebra Appl.}, 52 (1983) 645--685. 

{\bibitem[SR12]{SR12} S. Sullivant, J. Rhodes, 
\newblock \emph{Identifiability of large phylogenetic mixture models.}
\newblock  {Bull. Math. Biol.}, 74 (2012), no. 1, 212--231.}

\bibitem[Terr15] {Terr}
Terracini A.
\newblock \emph{Sulla rappresentazione delle coppie di forme ternarie 
mediante somme   di potenze di forme lineari.}
\newblock {Ann. di Matem. Pura ed Appl.}, XXIV(III) (1915) 91--100.

\end{thebibliography}
\end{document}